\newtheorem{theorem}{Theorem}[section]
\newtheorem{lemma}[theorem]{Lemma}
\newtheorem{proposition}[theorem]{Proposition}
\newtheorem{corollary}[theorem]{Corollary}
\DeclareMathSymbol{\Z}{\mathbin}{AMSb}{"5A}
\DeclareMathSymbol{\C}{\mathbin}{AMSb}{"43}
\DeclareMathSymbol{\N}{\mathbin}{AMSb}{"4E}
\DeclareMathSymbol{\Q}{\mathbin}{AMSb}{"51}
\DeclareMathSymbol{\R}{\mathbin}{AMSb}{"52}
\begin{document}

\title[On the Ramsey numbers of trees with small diameter]{On the
Ramsey numbers of trees with small diameter}

\author[Patrick Bahls]{Patrick Bahls}

\email{pbahls@unca.edu}

\author[T. Scott Spencer]{T. Scott Spencer}

\email{tsspence@unca.edu}

\keywords{Ramsey number, caterpillar, tree, Ramsey saturation}

\subjclass[2000]{05C05,05C55}

\begin{abstract} We estimate the Ramsey number $r(T)=r(T,T)$ for various trees $T$, obtaining a precise value for $r(T)$ for a large number of trees of diameter $3$.  Furthermore we prove that all trees of diameter $3$ are Ramsey unsaturated as defined by Balister, Lehel, and Schelp in their article ``Ramsey unsaturated and saturated graphs.'' \end{abstract}

\maketitle

\section{Introduction} \label{introduction}

\noindent Our goal in this paper is to establish some new results concerning Ramsey numbers of various types of trees (connected acyclic graphs).  In particular, we will establish upper and lower bounds on $r(T,T)$ for various caterpillars (obtaining precise values for certain trees $T$) and prove that some of these caterpillars, which we call bistars, are Ramsey unsaturated in the sense of \cite{balisterlehelschelp} and \cite{schelp}.  Though it is conjectured (see \cite{balisterlehelschelp} and \cite{schelp}) that ``most'' graphs are unsaturated, the latter result extends the rather short list of graphs explicitly known to be unsaturated.

We begin by recalling some basic definitions regarding Ramsey numbers.  Let $G_1=(V_1,E_1)$ and $G_2=(V_2,E_2)$ be graphs.  For our purposes, the graphs $G_i$ will always be simple, undirected, and connected.  The \textit{Ramsey number of} $G_1$ \textit{versus} $G_2$, denoted $r(G_1,G_2)$ is the minimum value $r$ such that any $2$-coloring of the edges of the complete graph $K_r$ yields a monochromatic subgraph isomorphic to one of $G_1$ or $G_2$.  The number $r(G_1,G_1)$ will be abbreviated to $r(G_1)$ and we will call this simply the \textit{Ramsey number of} $G_1$.  Though it is easy to extend the definition to consider an arbitrary number of $k$ colors, yielding the obvious definition for $r(G_1,...,G_k)$, we will not be concerned with this generalization here.  (See \cite{grahamrothschildspencer} for a thorough reference.)

We note that Ramsey numbers of trees versus trees have not been very well investigated.  In \cite{burrroberts}, Burr and Roberts establish the Ramsey number of any number of star versus one another.  A year later Burr (in \cite{burr}) proved that for any tree with $m$ vertices and any star with $n$ leaves satisfying $(m-1)|(n-1)$, $r(T,S_n)=m+n-1$.  In \cite{guovolkmann}, Guo and Volkmann establish upper bounds on the Ramsey number of tree/generalized-claw pairs, and very limited classes of bistars.  In \cite{erdosgraham}, Erd\H{o}s and Graham prove some very general but relatively weak bounds on $r(T)$ for an arbitrary tree $T$.  We will note in the following section how our results improve upon theirs in the cases of interest to us.

Specifically, we concern ourselves here with caterpillars.  A \textit{caterpillar} $C$ is any tree such that the removal of all of $C$'s leaves results either in a path of nonnegative length or in the empty set.  For a caterpillar $C=(V,E)$, the set $S(C) = \{ v \in V \ | \ d(v)>1 \}$, where $d(v)$ is the degree of $v$, will be called the \textit{spine} of $C$.

We wish to estimate $r(C)$ for any caterpillar $C$, obtaining precise values for this number whenever possible.  Our analysis will be most complete when $S(C)$ is small.  For instance, if $C$ is a star (that is, if $|S(C)|=1$), it is not hard to show that $r(C) \leq 2n$ (see Proposition~\ref{star}).  In the case of \textit{bistars} ($|S(C)|=2$), we obtain precise values for $r(C)$ in certain cases and fairly strong upper and lower bounds in others (see Propositions~\ref{propLowerbound}, \ref{upperboundbign}, and \ref{upperboundsmalln}; note that bistars are precisely the trees with diameter $3$).  For still larger $S(C)$ precise values are harder to come by.

Having established some bounds for $r(C)$ we will turn to the issue of saturation.  We say that a graph $G=(V,E)$, $G$ not complete, is \textit{Ramsey saturated} (or merely \textit{saturated}) if given any $e \in (V \times V) \setminus E$, the graph $G+e = (V,E \cup \{e\})$ satisfies $r(G+e)>r(G)$.  That is, a graph is saturated if any supergraph formed by adding an edge not already present results in a graph with a strictly greater Ramsey number.  We say that $G$ is \textit{unsaturated} otherwise.  One of our primary results (Theorem~\ref{bistarunsat}) states that bistars are unsaturated.  This result is unsurprising; indeed, in \cite{schelp} Schelp conjectures that every non-star tree $T$ satisfying $|V(T)| \geq 5$ is unsaturated, though proving every member of a specific family of trees is Ramsey unsaturated is typically very difficult.

Let us now begin by establishing some results concerning general caterpillars.

\section{A lower bound on $r(C)$ for a caterpillar $C$} \label{secLowerbound}

\noindent The following notation will be convenient.  As we will consider only two colors, we will consistently refer to edges as either ``blue'' edges or ``red'' edges.  For a given vertex $v$, we let \[ B_v = \{u \in V \ | \ \{u,v\} \in E, u \ {\rm blue} \} \] \noindent and we define the \textit{blue degree} of $v$ by $d_b(v) = |B_v|$.  We define $R_v$ and the \textit{red degree}, $d_r(v)=|R_v|$, similarly.  When dealing with vertices from an indexed set, $v=v_i$, we denote $B_{v_i}$ and $R_{v_i}$ by $B_i$ and $R_i$, respectively.

The simplest example of a caterpillar is the star $S_n=K_{1,n}$, consisting of a single vertex of degree $n$, each of its neighbors a leaf.  The following result is easy to prove.  We offer a prove here to give a flavor of our later arguments; see also \cite{burr}, \cite{burrroberts}, or \cite{grahamrothschildspencer}.

\begin{proposition} \label{star}
Let $n \in \mathbb{N}$.  Then \[ r(S_n) = \left\{ \begin{array}{rc} 2n-1 & n \ {\rm even,} \\ 2n & n \ {\rm odd.} \\ \end{array} \right. \]
\end{proposition}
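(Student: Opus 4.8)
The plan is to reformulate the avoidance condition in terms of colour-degrees. First I would note that a monochromatic copy of $S_n$ centred at a vertex $v$ is precisely a set of $n$ edges of a single colour incident to $v$; hence a $2$-colouring of $K_r$ contains no monochromatic $S_n$ if and only if $d_b(v)\le n-1$ and $d_r(v)\le n-1$ for every vertex $v$. Since $d_b(v)+d_r(v)=r-1$ in $K_r$, such a colouring can exist only when $r-1\le 2(n-1)$, giving the unconditional bound $r(S_n)\le 2n$. This already settles the upper bound in the odd case.

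To sharpen the upper bound when $n$ is even, I would examine $K_{2n-1}$, where every vertex has degree $2n-2$. Avoiding a monochromatic $S_n$ now forces $d_b(v)=d_r(v)=n-1$ for every $v$, so the blue graph must be $(n-1)$-regular on $2n-1$ vertices. The handshake lemma then requires $(2n-1)(n-1)$ to be even; but for $n$ even this is a product of two odd numbers, a contradiction. Hence no such colouring of $K_{2n-1}$ exists, and $r(S_n)\le 2n-1$. This parity obstruction is the heart of the argument, and it is exactly the same phenomenon that forces the lower bound up by one in the odd case.

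For the lower bounds I would exhibit explicit colourings. When $n$ is even I would $2$-colour $K_{2n-2}$ by taking the blue graph to be $(n-1)$-regular (possible since $2n-2$ is even, so $(2n-2)(n-1)$ is even, and $2n-2\ge n$); the red complement is then $(n-2)$-regular, so every colour-degree is at most $n-1<n$ and there is no monochromatic $S_n$, giving $r(S_n)\ge 2n-1$. When $n$ is odd I would instead $2$-colour $K_{2n-1}$ with an $(n-1)$-regular blue graph (possible since $n-1$ is even forces $(2n-1)(n-1)$ even); its red complement is also $(n-1)$-regular, both colour-degrees equalling $n-1<n$, so again no monochromatic $S_n$ and $r(S_n)\ge 2n$. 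Concretely, each required regular graph can be realised as a circulant on $\mathbb{Z}_m$, joining $i$ and $j$ in blue exactly when $i-j$ lies in a symmetric subset of $\mathbb{Z}_m$ of the appropriate size.

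Combining the matching bounds yields $r(S_n)=2n-1$ for $n$ even and $r(S_n)=2n$ for $n$ odd. I expect the only real subtlety to be the parity/handshake step: one must argue both that an odd-regular graph cannot exist on an odd number of vertices (driving the even-$n$ upper bound down) and that the appropriate regular graphs do exist on the smaller vertex sets (supplying the lower-bound colourings). Everything else is routine bookkeeping with colour-degrees.
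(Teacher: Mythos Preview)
Your proposal is correct and follows essentially the same approach as the paper: pigeonhole for the upper bound $r(S_n)\le 2n$, the handshake/parity obstruction to sharpen this to $2n-1$ when $n$ is even, and $(n-1)$-regular colourings (realised as circulants) for the lower bounds. The only cosmetic difference is that for the even-$n$ lower bound the paper uses the explicit colouring of $K_{2n-2}$ given by two blue copies of $K_{n-1}$ joined by red edges, rather than invoking the existence of an $(n-1)$-regular graph.
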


\begin{proof}
Consider a $2$-coloring of the graph $K_{2n}$.  By pigeonhole principle, every vertex is incident at least $n$ vertices of one color or the other, giving the required monochromatic star.  Thus $r(S_n) \leq 2n$.

Suppose that $n$ is odd, and let $\{v_1,...,v_{2n-1}\}$ be the vertices of $K_{2n-1}$.  Color every edge $\{v_i,v_{i+j}\}$ blue, for $j \in \{1,...,\frac{n-1}{2}\}$, where addition is performed modulo $2n-1$.  Color every remaining edge red.  The resulting $2$-coloring satisfies $d_b(v_i)=d_r(v_i)=n-1$ for every $i$, so that there is no monochromatic $S_n$.  Thus $r(S_n)=2n$ if $n$ is odd.

Now suppose that $n$ is even.  In order to $2$-color $K_{2n-1}$ without obtaining a monochromatic $S_n$ we must ensure that $d_b(v)=d_r(v)=n-1$ for every vertex $v$ in $K_{2n-1}$.  This coloring would yield the odd number $\frac{(2n-1)(n-1)}{2}$ of edges of each color, a clear contradiction.  Thus $r(S_n) \leq 2n-1$.

Finally, consider two copies of $K_{n-1}$, every edge of each of which is colored blue.  Connect each vertex of one with each vertex of the other by a red edge.  The result is a $2$-coloring of the graph $K_{2n-2}$ without a monochromatic $S_n$.  Therefore $r(S_n)>2n-2$, proving that $r(S_n)=2n-1$ after all. \end{proof}

Our next result establishes a lower bound on the Ramsey number $r(C)$ for an arbitrary caterpillar $C=(V,E)$ with at least $2$ vertices on its spine.  Let $C(n_1,...,n_k)$ denote the caterpillar with spine $\{v_1,...,v_k\}$ in which $n_i$ leaves are adjacent to $v_i$.  Without loss of generality we may assume that $n_1 \neq 0$ and $n_k \neq 0$, as otherwise we would be able to decrease the length of the spine.

\begin{proposition} \label{propLowerbound} Let $k \geq 2$ and let $n_i \in \mathbb{N}_0$ such that $n_1 \neq 0$ and $n_k \neq 0$.  Let \[ m_1 = \sum_{i=1}^{\lceil k/2 \rceil} n_{2i-1} + \lfloor k/2 \rfloor \hskip 5mm {\rm and} \hskip 5mm m_2 = \sum_{i=1}^{\lfloor k/2 \rfloor} n_{2i} + \lceil k/2 \rceil. \] \noindent Finally, let $m=\min\{m_1,m_2\}$.  Then \[ r(C(n_1,...,n_k)) \geq \sum_{i=1}^k n_i + k + m - 1 = |V|+m-1. \] \end{proposition}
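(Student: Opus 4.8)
The plan is to establish the lower bound by exhibiting an explicit 2-coloring of $K_{|V|+m-2}$ that contains no monochromatic copy of $C = C(n_1,\ldots,n_k)$. Since $r(C)$ is defined as the least $r$ forcing a monochromatic copy in $K_r$, producing a bad coloring on $|V|+m-2 = \sum n_i + k + m - 2$ vertices proves $r(C) \geq |V|+m-1$. The whole difficulty is packaged into finding the right coloring, so the rest of this plan concerns its construction and verification.

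Let me think about what structure the coloring needs. The quantities $m_1$ and $m_2$ are suggestive: $m_1$ counts the leaves hanging off the odd-indexed spine vertices plus $\lfloor k/2\rfloor$, and $m_2$ does the same for even-indexed spine vertices plus $\lceil k/2\rceil$. Reading these together, $m_1$ and $m_2$ appear to record the sizes of the two color classes of a proper 2-coloring of $C$ viewed as a bipartite graph: a caterpillar is bipartite, and the bipartition splits its vertices into two sides; the $+\lfloor k/2\rfloor$ and $+\lceil k/2\rceil$ terms account for the spine vertices falling on each side. Indeed $m_1 + m_2 = \sum_i n_i + k = |V|$. The quantity $m = \min\{m_1,m_2\}$ is therefore the size of the smaller part of any embedding of $C$ into a host graph, and the natural obstruction is that any monochromatic $C$ must place its smaller color class entirely inside one side of a partition of the vertex set.

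Guided by this, the plan is to split the $|V|+m-2$ vertices into blocks and color so that monochromatic connected subgraphs are confined to blocks too small to host $C$. Concretely, I expect to partition the vertex set into two parts, color all edges \emph{within} each part one color (say blue) and all edges \emph{between} the parts the other color (red) --- exactly the two-clique-plus-crossing-edges construction already used in the star case of Proposition~\ref{star}. The block sizes must be chosen against $m$: because the bipartite structure of $C$ forces any copy to have its smaller side of size at least $m$ lying on one side of the red bipartite crossing graph, making each part have size at most $m-1$ on the relevant side should kill the red copies, while making the blue cliques themselves too small to contain $C$ kills the blue copies. I would pick the two block sizes to be roughly $|V|-1$ and $m-1$, summing to $|V|+m-2$, and then check that neither a blue clique on $|V|-1$ vertices nor the complete bipartite red graph $K_{|V|-1,\,m-1}$ contains $C$.

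\textbf{The main obstacle} I anticipate is the verification that the complete bipartite red graph contains no $C$, since a complete bipartite graph \emph{does} contain many trees. The point must be that an embedding of $C$ into $K_{a,b}$ forces the two color classes of $C$ (of sizes $m$ and $|V|-m$) to land one on each side, so one side must have at least $m$ vertices available; choosing the smaller red block to have exactly $m-1$ vertices obstructs precisely this. Getting the inequalities to close up --- confirming that with blocks of size $|V|-1$ and $m-1$ the blue clique is genuinely too small (one must check $|V|-1 < |V|$, which needs care about exactly which clique size forbids $C$) and that the red side is blocked by the bipartite-class argument --- is where I would spend the bulk of the effort, handling the parity split between $\lceil k/2\rceil$ and $\lfloor k/2\rfloor$ and the fact that $m$ could equal either $m_1$ or $m_2$.
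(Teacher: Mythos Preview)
Your proposal is correct and matches the paper's approach exactly: the paper constructs two blue cliques of sizes $|V|-1$ and $m-1$ joined by red edges, observes that neither blue clique is large enough to contain $C$, and then argues that any red embedding of $C$ into the complete bipartite graph $K_{|V|-1,\,m-1}$ would force one bipartition class of $C$ (of size $m_1$ or $m_2$, both at least $m$) into the side with only $m-1$ vertices. Your identification of $m_1$ and $m_2$ as the bipartition class sizes of $C$ and the anticipated ``two placements'' case split are precisely the verification the paper carries out.
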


\begin{proof} Let $K_{|V|-1}$ and $K_{m-1}$ ($m$ as in the statement of the proposition) be complete graphs, every edge of both of which is colored blue.  Connect each vertex of one of these graphs with each vertex of the other with a red edge.  Since $C=C(n_1,...,n_k)$ is connected there are insufficiently many vertices in either $K_{|V|-1}$ or $K_{m-1}$ for us to find a blue copy of $C$.  Moreover, in any red copy of $C$ the odd-indexed spine vertices would lie in one of the blue complete graphs, and the even-indexed spine vertices would lie in the other.  The number $m$ has been chosen so that regardless of which of these two choices is made there are insufficiently many vertices in $K_{m-1}$ to account for all of the vertices of $C$ which would have to lie in $K_{m-1}$.

For example, suppose $v_1,v_3,...$ lie in $K_{|V|-1}$.  Then the $n_1+n_3+ \cdots$ leaves adjacent to these vertices, as well as the spine vertices $v_2,v_4,...$, would have to lie in $K_{m-1}$, $m_1$ vertices in all.  Since $m_1>m-1$, this cannot be.  A similar contradiction arises should $v_2,v_4,...$ lie in $K_{|V|-1}$ instead.

There is therefore no monochromatic subgraph of the form $C$ in this $2$-coloring of $K_{|V|+m-2}$. \end{proof}

\noindent \textbf{Examples.} \ We consider a few special cases.

\begin{enumerate}

\item \textbf{Paths.} \ Let $P_k$ be the path on $k$ vertices, $k \geq 4$.  The reader may check that our formula yields the same bound whether we consider $P_k$ as a caterpillar with $k$, $k-1$, or $k-2$ vertices on its spine.  For simplicity, we let $S(P_k)=P_k$, so that $n_i=0$ for all $i$, and $m=\min\{\lfloor k/2 \rfloor,\lceil k/2 \rceil\} = \lfloor k/2 \rfloor$, so that $r(P_k) \geq k+\lfloor k/2 \rfloor - 1$.

\item \textbf{Bistars.} \ Let $B(n_1,n_2)$ be the bistar with $2$ spine vertices, and without loss of generality let $n_1 \leq n_2$.  Here $k=2$ and $m=n_1+1$, so that $r(B(n_1,n_2)) \geq 2n_1+n_2+2$.

\item \textbf{Regular caterpillars.} \ If $k$ is arbitrary but $n_i=n$ for all $i$, Proposition~\ref{propLowerbound} yields \[ r(C(n,...,n)) = \left\{ \begin{array}{rl} \frac{3k(n+1)-2}{2} & k \ {\rm even}, \\ \frac{(3k-1)(n+1)}{2} & k \ {\rm odd}. \end{array} \right. \]

\end{enumerate}

The last example makes it clear that our lower bound grows linearly in both $n$ and $k$.  Moreover, the bound we obtain for bistars is dramatically better than those obtained by applying a general result due to Erd\H{o}s and Graham \cite{erdosgraham}, which gives us \[ r(B(n_1,n_2)) > (|E(B(n_1,n_2))|-1)\lfloor\frac{3}{2}\rfloor = n_1+n_2-2. \] \noindent Results from \cite{erdosgraham} also give an upper bound on $r(C)$ for a caterpillar; namely, \[ r(C) \leq 4|E(C)|+1 = 4|V(C)|-3. \] \noindent The upper bounds we will obtain in the following section for certain bistars are substantially stronger than this.

\section{The Ramsey numbers of bistars} \label{bistars}

\noindent We now focus our attention on bistars, trees with diameter $3$.  Let $B(m,n)$ denote the bistar with spine vertices (called \textit{centers}) of degree $m+1$ and $n+1$, and throughout this section we assume that $m \leq n$.  With this notation, Proposition~\ref{propLowerbound} gives $r(B(m,n)) \geq 2m+n+2$.

For small values of $m$, $B(m,n)$ is very similar to the star $S_{n+1}$, so the following result is not very surprising:

\begin{proposition} \label{smallm} Let $m \in \{1,2\}$.
\begin{enumerate}
\item $r(B(1,1)) = r(P_4) = 5$ and $r(B(1,n)) = r(S_{n+1}) = \left\{ \begin{array}{rl} 2n+1 & n \ {\rm odd,} \\ 2n+2 & n \ {\rm even}. \\ \end{array} \right.$ for all $n \geq 2$.

\item $r(B(2,2)) = 8$ and $r(B(2,n)) \leq 2n+3$ for all $n \geq 3$.
\end{enumerate}
\end{proposition}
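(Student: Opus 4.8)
The plan is to draw every lower bound for free from Proposition~\ref{propLowerbound} (together with $r(P_4)=5$) and the monotonicity $r(G)\ge r(H)$ whenever $H\subseteq G$, and to spend all the effort on the upper bounds, each of which asserts that a suitable complete graph cannot be $2$-coloured without a monochromatic copy of the bistar in question.

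For part~(1), note first that $B(1,1)$ is exactly $P_4$, so the lower bound $r(P_4)\ge 5$ is immediate from Proposition~\ref{propLowerbound}; for the matching upper bound I would observe that a $2$-colouring of $K_5$ puts at least $5$ of the $\binom{5}{2}=10$ edges in one colour, and that a graph on $5$ vertices with at least $5$ edges must contain $P_4$ (the graphs with no $P_4$ subgraph being disjoint unions of stars and triangles, which on $5$ vertices carry at most $4$ edges). For $B(1,n)$ with $n\ge 2$ the containment $S_{n+1}\subseteq B(1,n)$ gives $r(B(1,n))\ge r(S_{n+1})$, and the content is the reverse inequality. Here I would take a $2$-colouring of $K_N$ with $N=r(S_{n+1})$ (so $N=2n+1$ or $2n+2$ by Proposition~\ref{star}), fix a monochromatic---say blue---star $S_{n+1}$ with centre $v$ and leaf set $A=B_v$, and try to grow it into a blue $B(1,n)$ by attaching a blue pendant edge to one of its leaves. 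This succeeds immediately unless the blue star is \emph{isolated}: no vertex of $A$ has a blue neighbour outside $A\cup\{v\}$ and $|A|$ is as small as possible. In that degenerate situation every vertex outside $A\cup\{v\}$ is red-joined to all of $A$, and a short count shows this forces a red $B(1,n)$ instead.

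For part~(2), the lower bounds $r(B(2,2))\ge 8$ and (when $n=3$) $r(B(2,n))\ge 2n+3$ again come from Proposition~\ref{propLowerbound}. The value $r(B(2,2))=8$ I would finish by an extremal-graph count: a $2$-colouring of $K_8$ gives one colour at least $14$ of the $28$ edges, while a $B(2,2)$-free graph on $8$ vertices has at most $13$ edges (the dense part of any $B(2,2)$-free graph is confined to cliques of order at most $5$, the extreme configuration being $K_5\cup K_3$), so some colour must contain $B(2,2)$. For $B(2,n)$ with $n\ge 3$ I would argue directly in $K_{2n+3}$: every vertex has degree $2n+2\ge 2(n+1)$, so at least $n+2$ vertices are ``heavy'' in one fixed colour, say blue (blue-degree $\ge n+1$). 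Taking such a $v$ as the large centre, one tries to choose a blue neighbour $w$ carrying two further blue leaves disjoint from $v$'s $n$ chosen leaves; this greedy extension produces a blue $B(2,n)$ unless the blue neighbourhoods are cramped---$|B_v|$ minimal and the relevant blue neighbourhoods overlapping inside a small set---in which case a linear number of vertices are red-joined to both centres and yield a red $B(2,n)$.

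The main obstacle in every case is the degenerate branch of the extension argument: showing that when the monochromatic star (or double star) refuses to grow, the forced rigidity really does deposit the desired bistar in the \emph{other} colour, with all the leaf sets kept disjoint. This bookkeeping is tightest precisely for $B(2,2)$, where the generic bound $2n+3=7$ is one short of the truth; it is for this reason that $B(2,2)$ must be handled separately (its upper bound $8$ follows from a strict edge-count), whereas for $n\ge 3$ the analogous count only ties the clique bound at $n=3$ and can leave the complementary colour free to hide the double star, forcing the structural argument above.
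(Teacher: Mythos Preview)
Your treatment of part~(1) is essentially the paper's argument: locate a monochromatic $S_{n+1}$, try to hang one extra pendant on a leaf to obtain $B(1,n)$, and if every such pendant is forced into the other colour, read off the required bistar there. The extremal count you use for $r(P_4)\le 5$ is a harmless variant of the paper's appeal to the pigeonhole principle.

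Part~(2), however, diverges from the paper both in method and in rigour. The paper does \emph{not} compute $ex(8,B(2,2))$ or run a greedy extension. Instead it proves a key lemma (appearing inside this proof): in any $2$-colouring of $K_R$ with $R\ge 2m+n+2$, a vertex whose majority colour-degree reaches $m+n+1$ already forces a monochromatic $B(m,n)$. Consequently every vertex $v$ of $K_R$ satisfies $R-m-n-1\le d_b(v),d_r(v)\le m+n$. With $m=2$ this pins each colour-degree to within two of $n+1$, and the paper then counts cross-edges between $B_v$ and $R_v$: avoiding a blue $B(2,n)$ caps the blue cross-edges from each $u\in B_v$, avoiding a red $B(2,n)$ caps the red cross-edges from each $u\in R_v$, and summing these caps falls short of $|B_v|\cdot|R_v|$. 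This single mechanism handles $n=2$ and all $n\ge 3$ uniformly.

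Your route has two gaps. First, for $r(B(2,2))\le 8$ the assertion that ``the dense part of any $B(2,2)$-free graph is confined to cliques of order at most $5$'' is false as a structural statement: $K_{2,6}$ is $B(2,2)$-free on $8$ vertices (every edge has an endpoint of degree~$2$, so no edge can serve as the spine), yet it is not a disjoint union of cliques. Your numerical claim $ex(8,B(2,2))\le 13$ may well be true, but it does not follow from the description you gave and would need an independent argument. Second, for $n\ge 3$ the ``cramped neighbourhoods'' branch is only gestured at; you have not shown that the failure of the greedy extension actually forces enough red edges, with disjoint leaf sets, to produce a red $B(2,n)$. The paper's degree-bound lemma plus the bipartite edge count between $B_v$ and $R_v$ replaces this case analysis with a two-line inequality and is the idea you are missing.
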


\begin{proof}
To prove (1) we begin by noting that $B(1,1) = P_4$ and $r(P_4) \geq 5$ by Proposition~\ref{propLowerbound}, and the reverse inequality is not hard to show using the pigeonhole principle.  For $n \geq 2$, note that since $B(1,n)$ contains $S_{n+1}$ as a subgraph, $r(B(1,n)) \geq r(S_{n+1})$.

We prove the reverse inequality when $n$ is odd; the case for even $n$ is analogous.  If $n$ is odd, $n+1$ is even, so that $r(S_{n+1}) = 2(n+1)-1 = 2n+1$.  Consider a $2$-colored copy of $K_{2n+1}$ and without loss of generality suppose the $v=V(K_{2n+1})$ is the center of a blue star $S_{n+1}$ with leaves $\{v_1,...,v_{n+1}\}$.  In order to avoid a blue copy of $B(1,n)$ every edge $\{v_i,u_j\}$ (for $u_j \not\in \{v,v_1,...,v_{n+1}\}$) must be red.  However, then we obtain a red copy of $B(1,n)$ with centers $v_i$ and $u_j$ for any $i$ and $j$.  Therefore $r(B(1,n)) \leq 2n+1$, and equality must hold.

To prove (2), first consider $B(2,2)$.  Proposition~\ref{propLowerbound} shows that $r(B(2,2)) \geq 8$.  To obtain the reverse inequality, we will require the following lemma, which will be of crucial importance in establishing many of our later results as well:

\begin{lemma} \label{maxcoldegree} Let $m,n \in \mathbb{N}$ and let $R \geq 2m+n+2$.  Consider a $2$-coloring of the edges of $K_R$.  If there exists a vertex $v \in V = V(K_R)$ such that $\max\{d_b(v),d_r(v)\} \geq m+n+1$, then $K_R$ contains a monochromatic $B(m,n)$. \end{lemma}

\begin{proof} Suppose, without loss of generality, that $d_b(v_1) \geq m+n+1$, and consider any $v_2 \in B_1$.

First suppose $d_b(v_2) \geq m+1$.  In this case it is easy to see that $v_1$ and $v_2$ form the centers of a blue bistar $B(m,n)$, regardless of the size of $B_1 \cap B_2$.

Thus we may assume $d_b(v_2) \leq m$, so that $d_r(v_2) \geq R-m-1 \geq m+n+1$.  By an argument like that in the previous paragraph we see that $v \in R_2 \Rightarrow d_r(v) \leq m$.  Therefore if $v \in B_1 \cap R_2$, we know that $d(v) = d_b(v)+d_r(v) \leq 2m$, a contradiction, since $d(v) \geq 2m+n+1 > 2m$.

This forces $B_1 \cap R_2 = \emptyset$.  However, this too is a contradiction, since $d_b(v_1) \geq m+n+1$ and $d_r(v_2) \geq m+n+1$, so the pigeonhole principle implies there must be nontrivial intersection between $B_1$ and $R_2$.

We are now done, as every possibility yields either a monochromatic copy of $B(m,n)$ or a contradiction. \end{proof}

A slight restatement of the above lemma gives us bounds on both $d_b(v)$ and $d_r(v)$ for $v \in K_R$ as above:

\begin{corollary} \label{boundsondegrees} Let $m,n \in \mathbb{N}$ such that $m \leq n$ and $R \geq 2m+n+2$, and consider a $2$-coloring of the edges of $K_R$.  If $K_R$ contains no monochromatic bistar $B(m,n)$, then $R-m-n-1 \leq d_b(v),d_r(v) \leq m+n$ for all $v \in V(K_R)$. \end{corollary}

We now return to proving that $r(B(2,2))=8$.  Consider any $2$-coloring of $K_8$ and let $v \in V(K_8)$.  By pigeonhole principle, we may assume without loss of generality that $d_b(v) \geq 4$, but by Corollary~\ref{boundsondegrees} we know that $d_b(v) \leq 4$ as well, so $d_b(v) = 4$.  To avoid a blue $B(2,2)$ it must be that for every $u_b \in B_v$, $\{u_b,u_r\}$ is blue for at most one $u_r \in R_v$, and that for every $u_r \in R_v$, $\{u_b,u_r\}$ is red for at most one $u_b \in B_v$.  This is an obvious contradiction to the pigeonhole principle, when applied to the $12$ edges between the two sets $B_v$ and $R_v$.  Thus there must be a monochromatic copy of $B(2,2)$, and $r(B(2,2)) \leq 8$, as needed.

Now let $n \geq 3$, and consider any $2$-coloring of the edges of $K_{2n+3}$.  Fix $v$ and suppose without loss of generality that $d_b(v) \geq d_r(v)$.  Corollary~\ref{boundsondegrees} guarantees that $n \leq d_r(v) \leq d_b(v) \leq n+2$, and of course $d_b(v)+d_r(v) = d(v) = 2n+2$.

First suppose that $d_b(v)=n+2$ and $d_r(v)=n$.  As in the proof of the case $n=2$, in order to avoid a blue $B(2,n)$ every $u_b \in B_v$ shares a blue edge with at most one $u_r \in R_v$.  Furthermore, in order to avoid a red $B(2,n)$ every $u_r \in R_v$ shares a red edge with at most $n-1$ vertices $u_b \in B_v$.  Therefore there can be no more than $n(n-1)+n+2 = n^2+2$ edges between $B_v$ and $R_v$, which is clearly not so: the number of such edges is $n^2+2n > n^2+2$.  Therefore $K_{2n+3}$ must contain a monochromatic $B(2,n)$ in this case.

We obtain a similar contradiction in case $d_b(v)=d_r(v)=n+1$: now avoiding a monochromatic $B(2,n)$ forces there to be at most $2n+2$ edges between $B_v$ and $R_v$.  We therefore conclude that $r(B(2,n)) \leq 2n+3$ when $n \geq 3$.
\end{proof}

Note that since $B(2,n)$ contains $B(1,n)$ as a subgraph, we may actually conclude that \[ r(B(2,n)) \in \left\{ \begin{array}{rl} \{2n+1,2n+2,2n+3\} & n \ {\rm odd,} \\ \{2n+2,2n+3\} & n \ {\rm even}. \\ \end{array} \right. \]

Our next result establishes an upper bound on $r(B(m,n))$ for a large number of bistars with larger $m$, generalizing (2) of Proposition~\ref{smallm}.

\begin{proposition} \label{upperboundbign} Let $m,n \in \mathbb{N}$ such that $m \geq 3$ and $m+2 \leq n \leq 2m-1$. Then $r(B(m,n)) \leq 2n+m+1$. \end{proposition}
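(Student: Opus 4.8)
The plan is to argue by contradiction. Suppose a $2$-coloring of $K_R$ with $R=2n+m+1$ has no monochromatic $B(m,n)$, and derive an impossible edge count. Since $n\ge m+2$ we have $R\ge 2m+n+2$, so Corollary~\ref{boundsondegrees} applies: every vertex satisfies $n\le d_b(v),d_r(v)\le m+n$ with $d_b(v)+d_r(v)=2n+m$. First I would fix a vertex $v$ and, swapping colors if necessary (the statement is symmetric in the two colors), assume $d_b(v)\ge d_r(v)$. Writing $b=d_b(v)$ and $r=d_r(v)$, this forces $b\ge n+\lceil m/2\rceil\ge n+2$ and $n\le r\le n+\lfloor m/2\rfloor$. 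As in the proof of Proposition~\ref{smallm}, I would then estimate the number of edges running between $B_v$ and $R_v$: there are exactly $br$ of them, each blue or red.

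The key step is a sharpened per-vertex bound, obtained by a leaf-pool (defect Hall) argument rather than the naive count used for $B(2,n)$. Given $u\in B_v$, try to build a blue $B(m,n)$ with $v$ as the degree-$(n+1)$ center and $u$ as the degree-$(m+1)$ center: the $n$ leaves at $v$ are chosen from $B_v\setminus\{u\}$ and the $m$ leaves at $u$ from $B_u\setminus\{v\}$, and disjoint choices exist exactly when the union of the two candidate sets has size at least $m+n$. A short computation shows this union has size $b-1+q$, where $q$ is the number of blue neighbors of $u$ lying in $R_v$; the two individual size requirements hold automatically since $b\ge n+2$ and $d_b(u)\ge n\ge m+2$. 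Hence $q\ge m+n+1-b$ already produces a blue $B(m,n)$, so each $u\in B_v$ sends at most $m+n-b$ blue edges into $R_v$, and the blue edges between $B_v$ and $R_v$ number at most $b(m+n-b)$. The symmetric argument, with $v$ as the degree-$(m+1)$ center, bounds for each \emph{red-large} $w\in R_v$ (those with $d_r(w)\ge n+1$) the red edges from $w$ to $B_v$ by $m+n-r$.

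If every vertex of $R_v$ were red-large, combining these would give $br\le b(m+n-b)+r(m+n-r)$, which rearranges, using $b^2+r^2+br=(b+r)^2-br$ and $b+r=2n+m$, to $n(2n+m)-br\ge (2n+m)(2n-m)/4>0$ — already a contradiction with comfortable room. The hard part, and the genuine obstacle, is that the refinement is unavailable for the \emph{red-small} vertices of $R_v$, namely those with $d_r(w)=n$ (equivalently blue-maximal, $d_b(w)=m+n$); for such $w$ only the trivial bound ``$w$ has at most $n-1$ red neighbors in $B_v$'' holds (one red edge of $w$ runs to $v\notin B_v$). Writing $s$ for the number of such vertices in $R_v$, the count becomes
\[ br\le b(m+n-b)+(r-s)(m+n-r)+s(n-1), \]
so that $n(2n+m)-br\le s(r-m-1)\le r(r-m-1)$. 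Closing the argument then reduces, after substituting $b=2n+m-r$, to verifying $n(2n+m)>r(2n-1)$, which holds because $r\le (2n+m)/2$ (this is precisely where the hypothesis $b\ge r$ is spent). This contradiction yields $r(B(m,n))\le 2n+m+1$. I expect the entire difficulty to lie in isolating and absorbing the contribution of the blue-maximal vertices, since the naive $B_v$-to-$R_v$ count alone is too weak as soon as $m\ge 3$.
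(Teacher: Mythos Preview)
Your argument is correct and is essentially the paper's proof: both fix a vertex $v$ with $d_b(v)\ge d_r(v)$, bound the blue edges from each $u\in B_v$ into $R_v$ by $m+n-b$ via the same leaf-pool construction (the paper writes this bound as $a$, where $b=m+n-a$), bound the red edges from each $w\in R_v$ into $B_v$, and compare with the exact count $br$. The only real difference is packaging: the paper parametrises $R=2m+n+x+2$ and solves a quadratic in $x$ to locate the threshold $x=n-m-1$, whereas you set $R=2n+m+1$ from the outset; and the paper uses the uniform red bound $n-1$ for every $w\in R_v$, whereas you split into red-large and red-small vertices. That split is unnecessary here---once you invoke $s\le r$ at the end you have replaced every $m+n-r$ by $n-1$, recovering exactly the paper's estimate and the same final inequality $n(2n+m)>r(2n-1)$, so the ``hard part'' you anticipated never actually bites. (As a side remark, your parenthetical about the all-red-large case is phrased backwards: the coloring bound gives $n(2n+m)-br\le 0$, and it is AM--GM that gives the opposite sign; the contradiction is the clash between the two, not a single rearrangement.)
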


\begin{proof} Let us pick and fix $v_1 \in V$, and let $x \geq 0$ so that $R=2m+n+x+2$.  Following Corollary~\ref{boundsondegrees} we may let $d_b(v_1) = m+n-a$ and $d_r(v_1) = m+x+a+1$ for some $a \in [0,\lfloor \frac{n-x-1}{2} \rfloor]$.

First consider $v \in R_1$.  If there are at least $n$ vertices $u$ in $B_1$ such that $\{v,u\}$ is red, then $v_1$ and $v$ form the centers of a red $B(m,n)$.  Therefore we may assume that every vertex $v \in R_1$ shares a red edge with at most $n-1$ vertices in $B_1$.  The number of red edges between a vertex in $B_1$ and a vertex in $R_1$ is thus at most $(n-1)(m+x+a+1)$.

Note that \[ n+1 \leq m+n-a \Leftrightarrow a+1 \leq m. \] \noindent But $a+1 \leq \frac{n-x+1}{2}$, and \[ \frac{n-x+1}{2} \leq m \Leftrightarrow n \leq 2m+x-1. \] \noindent Since $n \leq 2m-1$ and $x \geq 0$, this last inequality holds, so that $n+1 \leq m+n-a = d_b(v_1)$.

Suppose now that for some $v \in B_1$, $v$ shares a blue edge with $a+1$ vertices in $R_1$.  Since $d_b(v) \geq m+1$, we know that $v$ shares a blue edge with at least $m-a$ vertices in $B_1$.  Pick $m-a$ such vertices, $u_1,...,u_{m-a}$, leaving $m+n-a-(m-a)=n$ vertices, $w_1,...,w_n$ in $B_1$.  We have now found a blue $B(m,n)$, with centers $v_1$ and $v$ and leaves $w_1,...,w_n,u_1,...,u_{m-a}$, the first set adjacent to $v_1$ and the second to $v$.

We may therefore assume that every $v \in B_1$ shares a blue edge with at most $a$ vertices in $R_1$.  As $v \in B_1$ is arbitrary there are at most $a(m+n-a)$ blue edges between a vertex in $B_1$ and a vertex in $R_1$.

The computations above show that the total number of edges shared by a vertex in $B_1$ and a vertex in $R_1$ is at most $(n-1)(m+x+a+1)+a(m+n-a)$.  However, the number of such edges is obviously $(m+x+a+1)(m+n-a)$.  Thus we obtain a contradiction when \[ (m+x+a+1)(m+n-a) > (n-1)(m+x+a+1)+a(m+n-a), \] \noindent or, after rearranging, \[ (m+1)(m+x+1) > a(m+n+x). \] \noindent This inequality follows from the stronger one obtained by replacing $a$ by the upper bound $\frac{n-x-1}{2}$: \[ (m+1)(m+x+1) > \left(\frac{n-x-1}{2}\right)(m+n+x). \] Finally, we rearrange this to obtain an inequality featuring a quadratic polynomial in $x$: \[ x^2+(3m+3)x+2(m+1)^2+(1-n)(m+n) > 0. \] \noindent The polynomial has
a unique positive root at $x=n-m-2$, so that our strict inequality holds when $x \geq n-m-1$.  This contradiction shows us that in such
cases we must be able to find a monochromatic $B(m,n)$.  Therefore we are guaranteed such a $B(m,n)$ whenever \[ R=2m+n+x+2 \geq 2m+n+n-m-1+2 = 2n+m+1. \] \noindent Thus $r(B(m,n)) \leq 2n+m+1$, as needed. \end{proof}

An entirely analogous proof can be used to verify the following result.

\begin{proposition} \label{upperboundsmalln} Suppose that $m \geq 2$ and $n \in \{m,m+1\}$.  Then $r(B(m,n)) \leq 2m+n+2$. \end{proposition}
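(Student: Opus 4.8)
The plan is to follow the proof of Proposition~\ref{upperboundbign} almost verbatim, the only real change being that the contradiction must now be forced already at the smallest admissible value of the slack parameter, $x=0$, since we aim at $R = 2m+n+2$ rather than at $2n+m+1$.

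First I fix an arbitrary vertex $v_1 \in V(K_R)$ with $R = 2m+n+2$ (so $x = 0$) and assume, toward a contradiction, that the $2$-coloring of $K_R$ contains no monochromatic $B(m,n)$. By Corollary~\ref{boundsondegrees} I may write $d_b(v_1) = m+n-a$ and $d_r(v_1) = m+a+1$, and after possibly interchanging the two colors I may assume $a \in [0, \lfloor (n-1)/2 \rfloor]$. I then reproduce the two edge-counting arguments of Proposition~\ref{upperboundbign} on the sets $B_1$ and $R_1$: avoiding a red $B(m,n)$ forces each vertex of $R_1$ to send a red edge to at most $n-1$ vertices of $B_1$, while avoiding a blue $B(m,n)$ forces each vertex of $B_1$ to send a blue edge to at most $a$ vertices of $R_1$. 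Comparing the resulting bound $(n-1)|R_1| + a|B_1|$ with the exact count $|B_1||R_1|$ and simplifying exactly as before yields, at $x=0$, the inequality $(m+1)^2 > a(m+n)$ as a sufficient condition for a monochromatic $B(m,n)$.

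Because the left-hand side does not depend on $a$ while the right-hand side increases in $a$, it suffices to check the inequality at the extreme value $a = (n-1)/2$, that is, to verify $(m+1)^2 > \tfrac{n-1}{2}(m+n)$. For $n = m$ this reduces to $3m+1>0$ and for $n = m+1$ to $\tfrac{3}{2}m + 1 > 0$, both trivially true. The resulting contradiction shows that every $2$-coloring of $K_{2m+n+2}$ contains a monochromatic $B(m,n)$, so $r(B(m,n)) \le 2m+n+2$. Combined with the lower bound $r(B(m,n)) \ge 2m+n+2$ of Proposition~\ref{propLowerbound}, this determines $r(B(m,n))$ exactly when $n \in \{m,m+1\}$.

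The only genuine work, as opposed to transcription, sits in two places. The blue-side argument needs $d_b(v_1) = m+n-a \ge n+1$, which requires $n \le 2m-1$; this is automatic when $n=m$, and for $n=m+1$ it is exactly the hypothesis $m \ge 2$ (with equality at $m=2$), which explains the restriction in the statement. Secondly, whereas in Proposition~\ref{upperboundbign} the contradiction came from pushing $x$ past the positive root $x = n-m-2$ of the governing quadratic, here, because $n \le m+1$, that root drops to a negative value, so $x=0$ already lies beyond both roots and the quadratic is positive there; this is what the easy numeric check above records. I expect this boundary bookkeeping---tracking where the hypotheses $m \ge 2$ and $n \le m+1$ are actually consumed---to be the only delicate point, the calculation itself being routine.
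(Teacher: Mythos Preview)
Your proposal is correct and takes essentially the same approach as the paper, which explicitly states that ``an entirely analogous proof'' to that of Proposition~\ref{upperboundbign} works; you have simply written out that analogous argument at $x=0$ and verified the resulting inequality $(m+1)^2 > a(m+n)$ for $n\in\{m,m+1\}$. Your bookkeeping on where the hypotheses $m\ge 2$ and $n\le 2m-1$ are consumed is accurate and matches the paper's intended argument.
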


Combining this with Proposition~\ref{propLowerbound}, we obtain

\begin{theorem} \label{equal} Suppose that $m \geq 2$ and $n \in \{m,m+1\}$.  Then $r(B(m,n))=2m+n+2$. \end{theorem}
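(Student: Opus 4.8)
The plan is to prove the equality by sandwiching $r(B(m,n))$ between a lower bound and an upper bound that coincide, both of which are already available from results established earlier in the paper.

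First I would pin down the lower bound. Proposition~\ref{propLowerbound}, specialized to bistars as recorded in Example~2 of Section~\ref{secLowerbound}, gives $r(B(n_1,n_2)) \geq 2n_1 + n_2 + 2$ whenever $n_1 \leq n_2$. Translating into the notation of the present section, in which $B(m,n)$ carries $m \leq n$ leaves on its two centers, this reads $r(B(m,n)) \geq 2m + n + 2$. This holds for every pair with $m \leq n$, and in particular whenever $n \in \{m, m+1\}$, so no case analysis is needed for this direction.

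Next I would invoke the upper bound. Proposition~\ref{upperboundsmalln} asserts precisely that, under the hypotheses $m \geq 2$ and $n \in \{m, m+1\}$, one has $r(B(m,n)) \leq 2m + n + 2$. Since the hypotheses of the theorem match those of the proposition verbatim, this inequality applies directly, with no additional argument required.

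Combining the two bounds yields $2m + n + 2 \leq r(B(m,n)) \leq 2m + n + 2$, which forces equality and completes the proof. I do not anticipate any genuine obstacle here: the substantive work is carried entirely by the two propositions, and the only point to verify is that the lower bound furnished by Proposition~\ref{propLowerbound} matches the upper bound of Proposition~\ref{upperboundsmalln} exactly, which the specialization above confirms.
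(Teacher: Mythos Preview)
Your proposal is correct and matches the paper's approach exactly: the paper derives Theorem~\ref{equal} simply by combining the lower bound of Proposition~\ref{propLowerbound} with the upper bound of Proposition~\ref{upperboundsmalln}, which is precisely what you do.
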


\section{Ramsey saturation of bistars} \label{saturation}

\noindent It is not hard to show that stars $S_n$ are saturated.  Indeed, we showed in Section~\ref{secLowerbound} that $r(S_n) \leq 2n$ for any $n$; saturation of the star $S_n$ follows from our next result:

\begin{proposition} \label{starplusedge}
Let $G$ be the graph $S_n+e$, where $e \not\in E(S_n)$.  Then $r(G) \geq 2n+1$.  As a consequence, $S_n$ is Ramsey saturated.
\end{proposition}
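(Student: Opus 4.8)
The plan is to establish the bound $r(G)\ge 2n+1$ by exhibiting a single explicit $2$-colouring of $K_{2n}$ with no monochromatic copy of $G=S_n+e$, and then to deduce saturation as a formal corollary. First I would record the structural features of $G$ that drive the whole argument. Writing $S_n$ with centre $c$ and leaves $\ell_1,\dots,\ell_n$, every non-edge of $S_n$ joins two leaves, so all graphs $S_n+e$ are isomorphic and it suffices to treat one of them (this tacitly assumes $n\ge 2$, so that a non-edge exists at all). Such a $G$ has $n+1$ vertices, is connected, and -- crucially -- contains a triangle, namely $c,\ell_i,\ell_j$ together with the new edge $e=\{\ell_i,\ell_j\}$. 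In particular $G$ is not bipartite.

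For the lower bound I would partition the $2n$ vertices of $K_{2n}$ into two sets $A$ and $B$ with $|A|=|B|=n$, colour every edge inside $A$ and every edge inside $B$ blue, and colour every edge between $A$ and $B$ red. Then the blue graph is a disjoint union of two cliques $K_n$, and the red graph is the complete bipartite graph $K_{n,n}$. To see there is no blue $G$: each blue component has only $n$ vertices, while $G$ is connected on $n+1$ vertices, so $G$ cannot embed in blue. To see there is no red $G$: $K_{n,n}$ is bipartite and hence triangle-free, whereas $G$ contains a triangle, so $G$ is not a subgraph of the red graph either. Since $K_{2n}$ thus carries a $2$-colouring with no monochromatic $G$, we conclude $r(G)>2n$, that is, $r(G)\ge 2n+1$.

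For the consequence, I would invoke Proposition~\ref{star}, which gives $r(S_n)\le 2n$ (indeed $2n-1$ or $2n$). Combining this with $r(S_n+e)\ge 2n+1$ yields $r(S_n+e)>r(S_n)$ for every non-edge $e$, all such $e$ being equivalent under the leaf-symmetry of $S_n$. This is exactly the assertion that $S_n$ is Ramsey saturated.

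The argument has no real obstacle beyond locating the right construction; the one idea that makes it work is that $G$ simultaneously (i) is connected on $n+1$ vertices, which defeats the blue cliques of size $n$, and (ii) contains a triangle, which defeats the triangle-free red bipartite graph. A useful sanity check is that this is precisely the "two monochromatic cliques joined by the other colour" device used for the star in Proposition~\ref{star}, with the clique size bumped from $n-1$ up to $n$ to accommodate both the extra vertex and the triangle of $G$.
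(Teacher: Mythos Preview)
Your proof is correct and uses the same construction as the paper: two blue cliques $K_n$ joined by a red complete bipartite graph $K_{n,n}$ (the paper presents the partition as $U\cup W\cup\{v\}$ with $|U|=n$, $|W|=n-1$, but the resulting colouring is identical to your $A\cup B$). Your explicit verification---connectedness on $n+1$ vertices rules out blue, the triangle in $G$ rules out red---actually supplies the details the paper leaves as ``easy to check.''
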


\begin{proof}
Consider the graph $K_{2n}$ with vertex set $V = U \cup W \cup \{v\}$, $|U|=n$ and $|W|=n-1$.  Color each of the edges $\{u_i,u_j\}$ (for $u_i, u_j \in U$), $\{w_i,w_j\}$ and $\{v,w_i\}$ (for $w_i,w_j \in W$) blue, and color each of the edges $\{v,u_i\}$ and $\{u_i,w_j\}$ (for $u_i \in U$ and $w_j \in W$) red.  It is easy to check that this $2$-coloring of $K_{2n}$ yields no monochromatic copy of $S_n+e$, so that $r(S_n+e) \geq 2n+1$. \end{proof}

Schelp and his colleagues (see \cite{balisterlehelschelp} and \cite{schelp}) conjecture that all trees with at least $5$ vertices and which are not stars are unsaturated.  In this direction we offering the following

\begin{theorem} \label{bistarunsat} Let $m,n \in \mathbb{N}$. Then the bistar $B(m,n)$ is unsaturated. \end{theorem}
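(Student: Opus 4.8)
The plan is to produce, for each bistar $B(m,n)$, one specific non-edge $e$ and to prove $r(B(m,n)+e) = r(B(m,n))$. Since $B(m,n)$ is a subgraph of $B(m,n)+e$, we automatically have $r(B(m,n)+e) \ge r(B(m,n))$, so it suffices to establish the reverse inequality, i.e. that every $2$-coloring of $K_N$ with $N = r(B(m,n))$ contains a monochromatic $B(m,n)+e$. Because we lack a closed form for $r(B(m,n))$ in general, I would not try to compute $N$; instead I would prove the stronger implication that for every $N \ge 2m+n+2$, any $2$-coloring of $K_N$ that already contains a monochromatic $B(m,n)$ also contains a monochromatic $B(m,n)+e$. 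Applying this at the (unknown) value $N = r(B(m,n)) \ge 2m+n+2$, where a monochromatic $B(m,n)$ is guaranteed by definition, then yields the theorem. The edge I would add is the one joining two leaves of the center of larger degree, so that $B(m,n)+e$ is a bistar carrying one extra triangle; the degenerate cases $m=1$ (in particular $B(1,1)=P_4$ and the near-stars $B(1,n)$) would be handled separately using the explicit values and extremal colorings of Propositions~\ref{star} and \ref{smallm}.

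For the main implication, suppose a $2$-coloring of $K_N$ contains a monochromatic, say blue, $B(m,n)$ with centers $a$ and $b$, where $b$ is the center of larger degree. The observation driving the argument is that the added edge $e$ is present ``for free'' as soon as two of $b$'s chosen blue leaves are themselves blue-adjacent: in that case the copy of $B(m,n)$ is already a copy of $B(m,n)+e$. Since the leaves of $b$ may be chosen among all of its blue neighbors, the only obstruction is that the blue neighborhood $S = B_b \setminus \{a\}$, which has size $d_b(b)-1 \ge n$, is a \emph{red} clique. But a red clique is equally useful in the other color: if $|S| \ge m+n+2$ it already contains a red $K_{m+n+2} \supseteq B(m,n)+e$, and the triangle edge is automatic because any two vertices of a clique are adjacent. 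Hence we may assume $n+1 \le d_b(b) \le m+n+2$.

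It remains to treat this bounded-degree regime, where $S$ is a red clique with $n \le |S| \le m+n+1$, just short of the $m+n+2$ vertices a bistar needs. Here the plan is to build the red $B(m,n)+e$ by taking its two centers $x,y$ inside the clique $S$ (they are red-adjacent, and any leaves drawn from $S$ are pairwise red-adjacent, so $e$ is again free) and harvesting the missing red leaves from the outside set $O = V \setminus (S \cup \{a,b\})$. The counting would use only the two facts available to us, namely $|S| \ge n$ and $N \ge 2m+n+2$, together with the structural remark that every vertex of $S$ spends a blue edge on $b$; a short pigeonhole on the red edges from $\{x,y\}$ into $O$ should supply the at most $m+2$ additional red leaves, with a separate check of how many of them each of $x$ and $y$ can claim.

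I expect this last regime to be the crux. The difficulty is that, precisely because a monochromatic $B(m,n)$ is already present, Corollary~\ref{boundsondegrees} is unavailable and we have no upper bound on vertex degrees to exploit; the extension must be powered entirely by the clique size $|S| \ge n$ and by $N \ge 2m+n+2$, and the bookkeeping for overlaps between the clique and the harvested leaves (and for splitting the new leaves between the two centers) is where the real work lies. A secondary subtlety is that the implication may be tight at the smallest admissible $N$, so the cases in which $r(B(m,n))=2m+n+2$ is known exactly (Theorem~\ref{equal} and Proposition~\ref{smallm}) may need to be confirmed directly at $N=2m+n+2$ using the slack already present in the upper-bound arguments of Section~\ref{bistars}, while the genuinely larger-$n$ bistars enjoy a comfortably large $N$ for the harvesting step.
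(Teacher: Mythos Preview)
Your overall framework --- take $N=r(B(m,n))\ge 2m+n+2$, find a monochromatic $B(m,n)$, and then argue that some fixed $B(m,n)+e$ must also appear --- is exactly the paper's strategy. The fatal problem is your choice of $e$. You propose to add an edge between two leaves of the larger-degree center, so that $B(m,n)+e$ contains a triangle. This specific $e$ does \emph{not} in general satisfy $r(B(m,n)+e)=r(B(m,n))$.

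Concretely, take $m=n=2$, so $r(B(2,2))=8$ by Theorem~\ref{equal}. Two-color $K_8$ by splitting the vertices into two sets of size $4$, coloring all edges inside each set blue and all edges between the sets red. The blue graph is $K_4\cup K_4$; since $B(2,2)+e$ has $6$ vertices, there is no blue copy. The red graph is $K_{4,4}$, which is bipartite and hence triangle-free, so there is no red copy of your $B(2,2)+e$ either. Thus $r(B(2,2)+e_{\text{yours}})\ge 9>r(B(2,2))$. The same construction (two blue cliques of size $\le m+n+1$ joined by a red complete bipartite graph) shows your $e$ fails whenever $r(B(m,n))\le 2(m+n+1)$, which includes all the cases of Theorem~\ref{equal}. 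This is precisely why the ``harvesting'' step you flagged as the crux cannot be completed: in these colorings every red $B(m,n)$ lives inside a bipartite red graph, and no amount of pigeonhole on outside vertices will produce the missing red triangle.

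The paper avoids this by choosing $e$ to join a leaf of one center to a leaf of the \emph{other} center, so that $B(m,n)+e$ contains a $4$-cycle rather than a triangle; this survives inside complete bipartite red graphs. With that choice of $e$, once all cross-leaf edges $\{u_i,v_j\}$ are forced red (else a blue $B(m,n)+e$ appears immediately), one has a red $K_{m,n}$ on the leaf sets, and the remaining work is to show the at least $m$ ``free'' vertices outside the bistar cannot all be colored consistently without producing a red $B(m,n)+e$. The paper carries this out by a short three-case analysis; your clique/harvest scheme is not needed once the correct $e$ is in hand.
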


\begin{proof} Fix $m,n \in \mathbb{N}$ and let $r=r(B(m,n))$. (Recall that $r \geq 2m+n+2$, by Proposition~\ref{propLowerbound}.)  We show that every $2$-coloring of $K_r$ results in a monochromatic copy of the graph $B(m,n)+e$ where $e$ is an edge between leaves of $B(m,n)$ which are adjacent to different centers.  This shows that for such $e$, $r(B(m,n)+e)=r(B(m,n))$.

Suppose we are given a $2$-coloring of $K_r$.  By our choice of $r$, $k_r$ contains a monochromatic copy of $B(m,n)$.  Let us denote by $u$ and $v$ the centers of blue degree $m+1$ and $n+1$, respectively, with $B_u = \{u_1,...,u_m\}$, $B_v = \{v_1,...,v_n\}$.  Since $r \geq 2m+n+2$, the set $F = V \setminus \{u,v,u_1,...,u_m,v_1,...,v_n\}$ satisfies $|F| \geq m$.  Let $F=\{f_1,...,f_{\ell}\}$, $\ell \geq m$.

In order that there be no blue subgraph of the form $B(m,n)+e$ as described above, every edge $\{u_i,v_j\}$ must be red.  In order then to avoid the presence of a red $B(m,n)+e$ one of three conditions must hold:

\begin{enumerate}

\item all edges of the form $\{u_i,f_k\}$ or $\{u_i,v\}$ are blue,

\item all edges of the form $\{v_j,f_k\}$ or $\{v_j,u\}$ are blue, or

\item all edges of the form $\{u_i,v\}$ or $\{v_j,u\}$ are blue, and there exist indices $a$, $b$, and $c$ such that $\{u_a,f_c\}$ $\{v_b,f_c\}$ are red but all other vertices of the form $\{u_i,f_k\}$ or $\{v_j,f_k\}$ are blue.

\end{enumerate}

\medskip

\noindent \textbf{Case 1.} \ In this case, in order that the vertices $u_i$ and $v$ not be the centers of a blue subgraph of the form $B(m,n)+e$, it must be that all of the edges of the form $\{v_i,f_k\}$ and $\{u,f_k\}$ ($i$ and $k$ arbitrary) are red.  However, now any fixed pair $v_b$ and $f_c$ form the centers of a red $B(m,n)+e$, with red edges $\{f_c,u\}$ and $\{f_c,v_j\}$ ($j \neq b$), as well as $\{v_b,u_i\}$ ($i$ arbitrary), completing the bistar itself and any red edge $\{u_i,v_j\}$ ($j \neq b$) providing the needed edge $e$.

\medskip

\noindent \textbf{Case 2.} \ In this case, in order that the vertices $v_j$ and $f_k$ not be the centers of a blue $B(m,n)+e$, it must be that all of the edges of the form $\{v,f_j\}$ and $\{u_i,f_j\}$ ($i$ and $j$ arbitrary) are red.  However now any fixed pair $u_a$ and $f_c$ form the centers of a red $B(m,n)+e$, with red edges $\{f_c,u_i\}$ and $\{f_c,v\}$ ($i \neq a$), as well as $\{u_a,v_j\}$ ($j$ arbitrary), completing the bistar itself and any red edge $\{u_i,v_j\}$ ($i \neq a$) providing the needed edge $e$.

\medskip

\noindent \textbf{Case 3.} \ Let $a$, $b$, and $c$ be as above.  Note that the vertices $u$ and $v_b$ form the centers of a blue $B(m,n)+e$, with blue edges $\{u,u_a\}$ and $\{u,v_j\}$ ($j \neq b$), as well as $\{v_b,v\}$ and $\{v_b,f_k\}$ ($k \neq c$), completing the bistar itself and the blue edge $\{v,u_a\}$ providing the needed edge $e$.

\medskip

Therefore any $2$-coloring of $K_r$ yields a monochromatic copy of $B(m,n)+e$, so $r(B(m,n)+e) \leq r(B(m,n))$; the reverse inequality is obvious, so the two Ramsey numbers are equal. \end{proof}

\section{Future directions} \label{future}

Just as establishing a precise value for $r(C)$ for a more general caterpillar $C$ is difficult, proving saturation for such a $C$ is complicated by the fact that as the number of vertices in the spine $S(C)$ grows, the number of non-isomorphic graphs of the form $C+e$ larger very quickly; a proof of saturation would likely involve more sophisticated techniques from enumerative combinatorics in order to argue the unavoidability of a specific graph $C+e$.

It is conceivable that the techniques we have applied here may lead to an understanding for moderately more complicated trees, such as the bistars we have not discussed thoroughly, and perhaps trees with diameter at $4$. However, even in these latter trees the number of vertices of degree $3$ may grow arbitrarily large, and fundamentally different techniques may be needed to obtain reasonable estimates for $r(C)$ unless $C$ is highly ``regular'' in some way.

\end{document}